\newtheorem*{rep@theorem}{\rep@title}
\newcommand{\newreptheorem}[2]{%
\newenvironment{rep#1}[1]{%
 \def\rep@title{#2 \ref{##1}}%
 \begin{rep@theorem}}%
 {\end{rep@theorem}}}
\newtheorem{intro_thm}{Theorem}
\theoremstyle{plain}
\newtheorem{teor}{Theorem}[section]
\newtheorem{lem}[teor]{Lemma}
\newtheorem{prop}[teor]{Proposition}
\theoremstyle{definition}
\newtheorem{deft}[teor]{Definition}
\theoremstyle{remark}
\newtheorem{oss}[teor]{Remark}
\DeclareMathOperator\homeo{\textup{Homeo}(X)}
\DeclareMathOperator\upH{\textup{H}}
\DeclareMathOperator\upL{\textup{L}}
\DeclareMathOperator\bbS{\mathbb{S}}
\DeclareMathOperator\bbZ{\mathbb{Z}}
\begin{document}

\title[Elementarity of virtual dendro-morphisms]{A note on elementarity of virtual dendro-morphisms for higher rank lattices}

\author[A. Savini]{A. Savini}
\address{Section de Math\'ematiques, University of Geneva, Rue du Conseil General 7-9, 1205 Geneva, Switzerland}
\email{Alessio.Savini@unige.ch}
\date{\today.\ \copyright{\ The author was partially supported by the SNSF grant no. 200020-192216.}}

\begin{abstract}
Let $\Gamma$ be a discrete countable group and let $(\Omega,\mu)$ be an ergodic standard Borel probability $\Gamma$-space. Given any non-elementary virtual dendro-morphism (that is a measurable cocycle in the automorphism group of a dendrite), we construct a unitary representation $V$ with no invariant vectors such that $\text{H}^2_b(\Gamma;V)$ contains a non-zero class. As a consequence, all virtual dendro-morphisms of a higher rank lattice must be elementary. 
\end{abstract}
  
\maketitle

\section{Introduction}

From a dynamical point of view, the most elementary and at the same time intriguing field of research is the study of group actions on topological spaces of dimension one. The very first case is given by \emph{circle actions}, that is representations of a group $\Gamma$ in $\textup{Homeo}^+(\bbS^1)$, the group of orientation preserving homeomorphisms. For such actions, orbits are well-understood. Moreover, one can construct a complete invariant for the dynamics of a circle action using the \emph{bounded Euler class} $e^b_{\mathbb{Z}} \in \upH^2_b(\textup{Homeo}^+(\bbS^1);\bbZ)$. More precisely, Ghys \cite{ghys:articolo} proved that two different circle actions $\rho_1,\rho_2:\Gamma \rightarrow \textup{Homeo}^+(\bbS^1)$ are equivalent (that is, \emph{semiconjugated}) if and only if the bounded classes obtained by pulling back $e^b_{\bbZ}$ are equal. More recently, Burger and Monod \cite{BM99,burger2:articolo} exploited some vanishing results in bounded cohomology to show that higher rank lattices act elementarily on the circle (see also \cite{Ghys1} for a different proof). 

Remaining in the context of one dimensional topological spaces, one can study \emph{dendrites}. Dendrites are continua containing no simple closed curve. Examples of dendrites are the ends compactification of a countable tree, some Julia sets \cite{BT07} and the Wazewski's universal dendrites \cite{Waz23}. Dendrites appear also as canonical quotients of continua \cite{Why28,Why30}. 

Monod and Duchesne studied systematically the dynamics of groups acting on dendrites. In \cite{DM19} they gave several structural properties about the homeomorphisms group $\homeo$, when $X$ is a dendrite containing no invariant subdendrite with respect to $\homeo$. When $\Gamma$ is a higher rank lattice or a lattice in a product of connected groups, the authors \cite{DM16} recovered an elementarity result similar to the one valid for circles actions. 

Using the same cohomological approach by Burger and Monod, the author \cite{Sav21} has recently extended Ghys' Theorem to the context of measurable cocycles. Measurable cocycles can be thought of as distorted actions where the distortion is parametrized by a standard Borel probability space. For this reason, since we are going to talk about measurable cocycles with values into the homeomorphisms group of a dendrite, we are actually going to talk about \emph{virtual dendro-morphism}. The term \emph{virtual} comes from the notation adopted by Mackey \cite{Mac66}.

Assuming that the virtual dendro-morphism is not \emph{elementary}, that is it does not admit an invariant family of points or arcs, we are able to extend some results by Monod and Duchesne in this context. More precisely, we have the following

\begin{intro_thm}\label{teor:unitary:rep}
Let $\Gamma$ be a countable group and let $(\Omega,\mu)$ be an ergodic standard Borel probability $\Gamma$-space. Given a non-elementary virtual dendro-morphism $\sigma:\Gamma \times \Omega \rightarrow \homeo$, there exists a canonical unitary $\Gamma$-representation $V$ without invariant vectors and such that $\textup{H}^2_b(\Gamma;V)$ has a non-zero class.
\end{intro_thm}

The unitary representation $V$ constructed above is given by some \emph{Bochner space} over the parameter space $\Omega$ which determines the distorted action. In this case, the reader can recognize similar techniques used by the author and Sarti \cite{Sav21,sarti:savini} to define the notion of \emph{parametrized class}. 

An immediate consequence of Theorem \ref{teor:unitary:rep} is that any virtual dendro-morphism of an amenable group is elementary. This extends the case of representations, known thanks to the work by Shi and Ye \cite{SY16}. 

Using jointly Theorem \ref{teor:unitary:rep} and the vanishing result by Monod and Shalom \cite[Theorem 1.4]{MonShal0} we are able to obtain the following

\begin{intro_thm}\label{teor:elementary}
Let $\mathbf{G}$ be a connected almost $k$-simple algebraic group over a field $\kappa$ with $\textup{rank}_{\kappa}(\mathbf{G}) \geq 2$. Let $\Gamma$ be a lattice in the $k$-points of $\mathbf{G}$. Any virtual dendro-morphism of $\Gamma$ over an ergodic standard Borel probability $\Gamma$-space is elementary. 
\end{intro_thm}

The above theorem should be compared with the ones by Witte-Zimmer \cite[Theorem 5.4]{WZ01}, Navas \cite[Theorem B]{Nav06} and the author \cite[Theorem 4]{Sav21}.

\subsection*{Plan of the paper} The first section is devoted to the basics about dendrites. In the second section we introduce virtual actions and we show that non-elementary ones admit a Furstenberg map. We conclude with the proof of the main theorems. 

\section{Dendrites}\label{sec:dendrites}

In this section we are going to remind the main properties of dendrites. For a more detailed discussion about those notions we refer the reader either to \cite[Section 2, Section 7]{DM16} or to \cite[Chapter X]{Nad92}. 

In topology, a \emph{continuum} is a non-empty connected compact metrizable space. A continuum $X$ is a \emph{dendrite} if any two distinct points of $X$ can be separated by a point. There exist other equivalent ways to define a dendrite. For instance, $X$ is a dendrite if and only if it is locally connected and contains no simple closed curve. 

Any non-empty closed connected set of a dendrite is still a dendrite. Since the intersection of two connected sets in $X$ remains connected \cite[Theorem 10.10]{Nad92}, given a non-empty subset $A$ it makes sense to consider the smallest subdendrite of $X$ containing $A$. We call it \emph{dendro-hull} of $A$ and we denote it by $[A]$. In the particular case when $A$ is the set $\{x,y\}$ of only two points, we use the notation $[x,y]$ to denote its dendro-hull. The subdendrite $[x,y]$ is called arc, and we define the \emph{interior} of the arc as $(x,y):=[x,y] \setminus \{x,y\}$. Notice that one can give another equivalent characterization of dendrites using arcs. Indeed for any two points $x,y \in X$, there exists a unique arc $[x,y]$ with $x,y$ as extremities. 

Since we are going to use it later, recall that given any subdendrite $M$ of $X$, there exists a retraction map $r^M:X \rightarrow M$, which is the identity on $M$ and such that $r(x)$ is the unique point contained in any arc from $x$ to a point in $M$, whenever $x \in X \setminus M$ \cite[Lemma 10.25,Terminology 10.26]{Nad92}.

Let $X$ be a dendrite and consider $x \in X$. Since $X$ is locally connected, the connected components of the complement $X \setminus \{x\}$ are open. The cardinality of the set of connected components is the (Menger-Urysohn) \emph{order} of $x$ in $X$ \cite[Section 46, I]{Kur61}. When $x$ has order equal to $1$ is called an \emph{end point} of $X$. We denote the subset of ends in $X$ by $\textup{Ends}(X)$. If the order of $x$ is $\geq 3$, then $x$ is a \emph{branch point} and we denote by $\textup{Br}(X)$ the set of branch points. Finally, if the order of $x$ is equal to $2$, we a have a \emph{regular point}. The set of regular points is dense in $X$, the one of end points is not empty and the branch points are countable. 

We conclude this short introduction about dendrites recalling the notion of fundamental bundle. Let $X$ be a dendrite not reduced to a point. The \emph{fundamental bundle} $\textup{Bund}(X)$ is the collection of pairs of the form $(x,C)$, where $x \in X$ and $C$ is a component of $X \setminus \{x\}$. Since the connected components are countably many, we can identify the fiber of $x$ to the discrete space $\pi_0(X \setminus \{x\})$. To topologize $\textup{Bund}(X)$ one can map it injectively in the product space $X \times \textup{Con(X)}$ with the map $(x,C) \mapsto (x, \{x\} \cup C)$. The space $\textup{Con}(X)$ is the set of closed connected subsets of $X$ endowed with the Vietoris topology (it is metrizable and compact). With the subspace topology, $\textup{Bund}(X)$ becomes a locally compact second countable space. 

One can also define the \emph{double bundle} $\textup{Bund}^2(X)$ over $X$ as the fibered product of two copies of $\textup{Bund}(X)$. The bundle $\textup{Bund}^2(X)$ has discrete countable fibers corresponding to pair of connected components of $X \setminus \{x \}$ for any $x \in X$. 

We conclude by noticing that $\homeo$ acts on $\textup{Bund}(X)$ and $\textup{Bund}^2(X)$ by homeomorphisms and all the projections are equivariant with respect to the $\homeo$-actions. 

\section{Virtual dendro-morphisms}

In this section we are going to introduce the notion of \emph{virtual dendro-morphism}. The latter will be a distorted group action on a dendrite and the distorsion will be parametrized by a standard Borel probability space. In the particular case of non-elementary virtual dendro-morphism we are going to show that there exists a unique minimal equivariant family of subdendrites. The existence of this minimal family will allow us to show the existence of a Furstenberg map for a non-elementary virtual dendro-morphism.

Given a dendrite $X$, recall that $\homeo$ is the group of homeomorphisms endowed with the compact-open topology. We can define the natural Borel structure associated to the latter topology, thus we will refer to such measurable structure when we are going to speak about measurable functions with target in $\homeo$. 

Before giving the definition of virtual dendro-morphism, recall that a standard Borel probability space $(\Omega,\mu)$ is a Borel space which is measurably isomorphic to a separable and completely metrizable space. If $\Gamma$ is a group acting on $(\Omega,\mu)$ by preserving the probability measure $\mu$, we are going to refer to $(\Omega,\mu)$ as a \emph{standard Borel probability $\Gamma$-space}. 

\begin{deft}\label{def:virtual:dendro:morphism}
Let $\Gamma$ be a countable group and let $(\Omega,\mu)$ be a standard Borel probability $\Gamma$-space. A \emph{virtual dendro-morphism} (or equivalently a \emph{measurable cocycle}) is a Borel map 
$$
\sigma:\Gamma \times \Omega \rightarrow \homeo 
$$ 
which satisfies 
\begin{equation}\label{eq:virtual:morphism}
\sigma(\gamma_1 \gamma_2,s)=\sigma(\gamma_1,\gamma_2.s)\sigma(\gamma_2,s) \ ,
\end{equation}
for every $\gamma_1,\gamma_2 \in \Gamma$ and almost every $s \in \Omega$. 
\end{deft}

Virtual dendro-morphisms are an actual generalization of group actions. Indeed any group action given by a representation $\rho:\Gamma \rightarrow \homeo$ can be seen as a virtual dendro-morphism by defining
$$
\sigma_\rho:\Gamma \times \Omega \rightarrow \homeo \ , \ \ \sigma(\gamma,s):=\rho(\gamma) \ ,
$$
for any standard Borel probability $\Gamma$-space $(\Omega,\mu)$. More generally, given a virtual dendro-morphism, one can construct a $\Gamma$-action on the product space $\Omega \times X$ determined by $\sigma$. Indeed we can define
\begin{equation}\label{eq:extended:action}
\gamma.(s,a):=(\gamma.s,\sigma(\gamma,s)a) \ ,
\end{equation}
for every $\gamma \in \Gamma, a \in X$ and almost every $s \in \Omega$. The cocycle condition given by Equation \eqref{eq:virtual:morphism} implies exactly that Equation \eqref{eq:extended:action} is an action. 

In this context the word cocycle refers to the fact that a virtual dendro-morphism is a cocycle in the sense of Feldman and Moore \cite{feldman:moore} associated to the orbital equivalence relation $\mathcal{R}_\Gamma$ of $\Gamma$ on $\Omega$. Following such interpretation, we can say that two virtual dendro-morphisms $\sigma_1,\sigma_2:\Gamma \times \Omega \rightarrow \homeo$ are \emph{equivalent} (or \emph{cohomologous}) if there exists a measurable map $f:\Omega \rightarrow \homeo$ such that
$$
\sigma_2(\gamma,s)=f(\gamma.s)^{-1}\sigma_1(\gamma,s)f(s) \ ,
$$
for every $\gamma \in \Gamma$ and almost every $s \in \Omega$. The equivalence between two virtual dendro-morphisms boils down to the fact that the associated actions on the product space $\Omega \times X$ are measurably conjugated by the map $(s,a) \mapsto (s,f(s)(a))$. 

Among all the possible virtual dendro-morphisms, there is the particular class of elementary ones. Before introducing them, we need to introduce the set $\mathcal{P}_{1,2}(X)$ of subsets of $X$ containing either only one or two points. Being a closed subset of $\textup{CL}(X)$, the set of all closed subsets of $X$ endowed with the Vietoris topology, we can consider on $\mathcal{P}_{1,2}(X)$ the Borel structure coming from the subspace topology. 

\begin{deft}\label{def:elementary:cocycle}
Let $\Gamma$ be a countable group and let $(\Omega,\mu)$ be a standard Borel probability $\Gamma$-space. Given a virtual dendro-morphism $\sigma:\Gamma \times \Omega \rightarrow \homeo$, we say that a Borel map 
$$
\varphi:\Omega \rightarrow \textup{CL}(X) 
$$ 
is \emph{$\sigma$-equivariant} if it holds
\begin{equation}\label{eq:sigma:eq}
\varphi(\gamma.s)=\sigma(\gamma,s)\varphi(s) \ .
\end{equation}
for every $\gamma \in \Gamma$ and almost every $s \in \Omega$. 

We say that the virtual dendro-morphism $\sigma$ is \emph{elementary} if there exists a $\sigma$-equivariant Borel map $\varphi:\Omega \rightarrow \mathcal{P}_{1,2}(X)$.
\end{deft}

In the particular case that $\sigma$ is an actual morphism with values into $\homeo$, it is immediate to verify that Definition \ref{def:elementary:cocycle} boils down to the definition of elementary action on a dendrite \cite[Definition 3.1]{DM16}. It is easy to verify that elementarity is a property which is invariant along the cohomology class of a fixed virtual dendro-morphism. Additionally, we want to remark that, if $\Omega$ is a $\Gamma$-ergodic space, then elementarity implies the existence of either an equivariant map $\varphi:\Omega \rightarrow X$ or an equivariat map $\varphi:\Omega \rightarrow \mathcal{P}_2(X)$, where the latter is the collection of $2$-points subsets of $X$. 

As for representations \cite[Proposition 3.2]{DM16}, there exist several characterizations of elementarity for a virtual dendro-morphism $\sigma$. A direct application of \cite[Proposition 3.3]{DM16} shows that the elementarity of $\sigma$ is equivalent to the existence of a measurable $\sigma$-equivariant map $\mu:\Omega \rightarrow \textup{Prob}(X)$, that is $\mu(\gamma.s)=\sigma(\gamma,s)_\ast\mu(s)$. Here $\textup{Prob}(X)$ is the space of probabilities over $X$ with the weak-$^\ast$ topology and $\sigma(\gamma,s)_\ast$ is the push-forward. Notice that the $\sigma$-equivariant map $\mu$ is nothing else that a $\Gamma$-fixed point for the affine action on $\textup{L}^0(\Omega,\textup{Prob}(X))$ defined by
$$
(\gamma.\mu)(s):=\sigma(\gamma^{-1},s)^{-1}_\ast \mu(\gamma^{-1}.s) \ ,
$$
for every $\gamma \in \Gamma$ and almost every $s \in \Omega$. Recall that $\upL^0(\Omega,\textup{Prob}(X))$ is the compact metrizable space of classes of measurable functions identified when they differ only on a null measure subset of $\Omega$.  

Viewing elementarity as a fixed point property, we can immediately argue that it can be extended using \emph{coamenability}. Recall that a subgroup $\Lambda < \Gamma$ is \emph{coamenable} if any continuous affine $\Gamma$-action on a convex compact set has a fixed point whenever it has a $\Lambda$-fixed point. Following the same line of \cite[Lemma 6.1]{DM16} we get the following

\begin{prop}\label{prop:coamenable}
Let $\Gamma$ be a countable group and let $\Lambda < \Gamma$ be a coamenable subgroup. Any virtual dendro-morphism of $\Gamma$ over a standard Borel probability $\Gamma$-space $(\Omega,\mu)$ is elementary if and only if its restriction to $\Lambda$ is elementary. 
\end{prop}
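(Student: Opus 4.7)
The forward implication is immediate: a $\sigma$-equivariant Borel map $\varphi\colon\Omega\to\mathcal{P}_{1,2}(X)$ restricts to a $\sigma|_\Lambda$-equivariant map witnessing elementarity of the restriction. So the content of the statement lies in the converse, and the plan is to transfer the $\Lambda$-fixed point produced by elementarity of $\sigma|_\Lambda$ to a $\Gamma$-fixed point via coamenability, exactly as in \cite[Lemma 6.1]{DM16}.

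The key translation step is the fixed point reformulation of elementarity recorded just before the proposition: a virtual dendro-morphism $\sigma$ is elementary if and only if the affine $\Gamma$-action on
\[
\calM := \upL^0(\Omega,\textup{Prob}(X)), \qquad (\gamma.\mu)(s) := \sigma(\gamma^{-1},s)^{-1}_\ast \mu(\gamma^{-1}.s),
\]
admits a fixed point. I would first record that $\calM$ is a compact metrizable convex set on which $\Gamma$ acts continuously and affinely: convexity is inherited pointwise from $\textup{Prob}(X)$, compactness and metrizability are the ones recalled in the paper, and continuity of the action follows from the joint measurability in the cocycle equation together with the fact that convergence in $\upL^0$ is detected by convergence in measure.

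Assuming $\sigma|_\Lambda$ is elementary, the reformulation provides a point $\mu_0\in\calM$ fixed by the $\Lambda$-action obtained by restricting the above formula to $\Lambda$. Since $\Lambda<\Gamma$ is coamenable, the hypothesis that every continuous affine $\Gamma$-action on a convex compact set with a $\Lambda$-fixed point has a $\Gamma$-fixed point applies directly to $\calM$: we obtain $\mu_\infty\in\calM$ fixed by the whole $\Gamma$-action, equivalently a measurable $\sigma$-equivariant assignment $\mu_\infty\colon\Omega\to\textup{Prob}(X)$. By the same reformulation (applied now to $\Gamma$), this forces $\sigma$ to be elementary.

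The main technical point I expect to need care with is the verification that the $\Gamma$-action on $\calM$ really is continuous in the topology making $\calM$ compact metrizable, since the formula $(\gamma.\mu)(s)=\sigma(\gamma^{-1},s)^{-1}_\ast\mu(\gamma^{-1}.s)$ combines the Borel dynamics on $\Omega$ with the pointwise $\homeo$-action on $\textup{Prob}(X)$. Once this is in place, the rest is formal: coamenability of $\Lambda$ converts the $\Lambda$-fixed measure field $\mu_0$ into a $\Gamma$-fixed one, and the equivalence between the fixed point property and Definition \ref{def:elementary:cocycle} closes the argument.
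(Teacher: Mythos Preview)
Your proposal is correct and follows essentially the same route as the paper: translate elementarity into the existence of a fixed point for the affine $\Gamma$-action on the compact convex set $\upL^0(\Omega,\textup{Prob}(X))$, then use coamenability to upgrade the $\Lambda$-fixed point to a $\Gamma$-fixed point. The paper's proof is terser and simply cites \cite[Lemma 7.1.(6)]{BFS} for the compact convex structure, but the logic is identical.
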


\begin{proof}
We prove only the non-trivial implication. Given a virtual dendro-morphism $\sigma:\Gamma \times \Omega \rightarrow \homeo$, suppose that its restriction to $\Lambda \times \Omega$ is elementary. This implies the existence of a $\Lambda$-fixed point in $\upL^0(\Omega,\textup{Prob}(X))$. Since the latter is convex and compact \cite[Lemma 7.1.(6)]{BFS}, by coamenability we get a $\Gamma$-fixed point. 
\end{proof}

In dynamics it is pretty usual to look for minimal invariant subsets for a given action. In our context we need first to introduce the notion of \emph{minimality} for closed-valued maps. We are going to follow Furstenberg \cite[Section 3]{furst:articolo}. Given $\sigma:\Gamma \times \Omega \rightarrow \homeo$, a $\sigma$-equivariant Borel map
$$
\kappa:\Omega \rightarrow \textup{CL}(X) 
$$
is \emph{minimal} if, given any other equivariant Borel map $\kappa':\Omega \rightarrow \textup{CL}(X)$, we have that 
\begin{equation}\label{eq:partial:order}
\kappa(s) \subseteq \kappa'(s) \ ,
\end{equation}
for almost every $s \in \Omega$. It is worth noticing that Equation \eqref{eq:partial:order} defines a partial order on the set of measurable closed-valued map on $\Omega$. We denote the space of (equivalence classes) of measurable closed-valued maps by $\textup{L}^0(\Omega,\textup{CL}(X))$. 

Given a minimal map $\kappa$ for $\sigma$, we are going to call the collection $\{\kappa(s)\}_{s \in \Omega}$ a \emph{minimal family} for $\sigma$. We are going to prove that for a non-elementary virtual dendro-morphism there exists a unique minimal family of subdendrites (compare with \cite[Lemma 4.1]{DM16}) . 

\begin{prop}\label{prop:minimal:family}
Let $\Gamma$ be a countable group and let $(\Omega,\mu_\Omega)$ be an ergodic standard Borel probability $\Gamma$-space. If a virtual dendro-morphism $\sigma:\Gamma \times \Omega \rightarrow \homeo$ is not elementary, then there exists a unique (as a class) equivariant minimal family of subdendrites for $\sigma$. 
\end{prop}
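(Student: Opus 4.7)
For existence, I would apply Zorn's lemma to the poset $\mathcal{F}$ of $\sigma$-equivariant Borel maps $\kappa:\Omega \to \textup{CL}(X)$ with $\kappa(s)$ a subdendrite for almost every $s$, ordered by the pointwise inclusion \eqref{eq:partial:order}. Since the constant map $\kappa \equiv X$ belongs to $\mathcal{F}$, this poset is non-empty. For a chain $\{\kappa_i\}_{i \in I} \subset \mathcal{F}$, using that $\textup{L}^0(\Omega,\textup{CL}(X))$ is compact metrizable (by applying \cite[Lemma 7.1.(6)]{BFS} to the compact metrizable Vietoris space $\textup{CL}(X)$, in complete analogy with what was done for $\textup{Prob}(X)$ in the proof of Proposition \ref{prop:coamenable}), separability lets me extract a countable decreasing subchain $\{\kappa_n\}_{n \in \bbN}$ whose pointwise intersection agrees, up to null sets, with the infimum of the whole chain. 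The map $\kappa_\infty(s) := \bigcap_n \kappa_n(s)$ is then Borel, non-empty by compactness of $X$, closed, and connected as a nested intersection of compact connected sets, so takes values in the subdendrites; equivariance is inherited from the $\kappa_n$. Hence $\kappa_\infty \in \mathcal{F}$ is a lower bound for the chain and Zorn's lemma produces a minimal element.

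For uniqueness, suppose $\kappa_1, \kappa_2$ are two minimal elements of $\mathcal{F}$ and put $D(s) := \kappa_1(s) \cap \kappa_2(s)$, a Borel equivariant set-valued map. The $\Gamma$-invariant set $\{s : D(s) \neq \emptyset\}$ has measure $0$ or $1$ by ergodicity of $\mu$. If it has measure $1$, then by \cite[Theorem 10.10]{Nad92} the intersection $D(s)$ is connected, hence a subdendrite, almost everywhere, so $D \in \mathcal{F}$ with $D \leq \kappa_i$ for $i=1,2$; combined with minimality this forces $\kappa_1 = D = \kappa_2$ as classes in $\textup{L}^0(\Omega,\textup{CL}(X))$.

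The remaining case to exclude is that $\kappa_1(s)$ and $\kappa_2(s)$ are almost everywhere disjoint. Here I would invoke the following dendrite bridge property: whenever $A, B$ are disjoint subdendrites of $X$, the retraction $r^A$ is constant on $B$, with common value a single gate $g(A,B) \in A$; this is a standard consequence of uniqueness of arcs together with the absence of simple closed curves, which forces the arcs from any $b \in B$ to points of $A$ to meet $A$ at a common entry point. Setting $g(s) := g(\kappa_1(s), \kappa_2(s))$ then defines a Borel, $\sigma$-equivariant map $\Omega \to X$ thanks to measurability of the retraction jointly in the point and the subdendrite and equivariance of the $\kappa_i$. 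But then $s \mapsto \{g(s)\} \in \mathcal{P}_{1,2}(X)$ contradicts non-elementarity of $\sigma$, ruling out this case.

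The main obstacle I expect is making the Zorn step fully rigorous for uncountable chains, namely justifying the extraction of a countable decreasing cofinal subchain within $\textup{L}^0(\Omega,\textup{CL}(X))$; a secondary technicality is the joint Borel measurability of the gate map $g$, which should reduce, through the fundamental bundle recalled in Section \ref{sec:dendrites}, to Borel measurability of the retraction operation on pairs $(x, M)$.
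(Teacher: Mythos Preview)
Your proposal is correct and follows essentially the same route as the paper: Zorn's lemma on $\textup{L}^0(\Omega,\textup{CL}(X))$ (exploiting compactness of $X$) for existence, and for uniqueness the ergodicity dichotomy followed by the retraction/gate construction $s\mapsto r^{\kappa_1(s)}(\kappa_2(s))$ to contradict non-elementarity. The only cosmetic difference is that the paper first obtains a minimal family of closed sets and then passes to dendro-hulls, whereas you work inside the sub-poset of subdendrite-valued maps from the outset; your acknowledged technicalities (countable cofinal subchain, joint measurability of the gate) are genuine but routine.
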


\begin{proof}
The existence is a mere consequence of Zorn's Lemma. The space $\textup{L}^0(\Omega,\textup{CL}(X))$ can be metrized using the following distance
$$
d(\kappa,\kappa'):=\int_{\Omega} d_{\textup{Haus}}(k(s),k'(s))d\mu(s) \ ,
$$
where $d_{\textup{Haus}}$ is the Hausdorff distance (notice that $X$ is compact and metrizable, hence the Hausdorff metric induces the Vietoris topology). Starting from the partial order defined by Equation \eqref{eq:partial:order}, we can exploit the compactness of $X$ and apply Zorn's Lemma to guarantee the existence of a minimal family $\mathscr{K}:=\{K_s\}_{s \in \Omega}$ of closed sets. To get a minimal family of subdendrites we can consider
$$
M_s:=[K_s] \ ,
$$
for almost every $s \in \Omega$. The family $\mathscr{M}:=\{M_s\}_{s \in \Omega}$ is $\sigma$-equivariant by the equivariance of the family $\mathscr{K}$. 

We want to show that the minimal family is unique. By contradiction, suppose that there exist two minimal equivariant families $\mathscr{M}=\{M_s\}_{s \in \Omega}$, $\mathscr{N}=\{N_s\}_{s \in \Omega}$ of subdendrites. By the ergodicity of the space $\Omega$, the two families $\mathscr{M}$ and $\mathscr{N}$ must be disjoint for almost every $s \in \Omega$ (otherwise they would intersect for almost every $s \in \Omega$ leading to an equivariant family of subdendrites strictly contained in both $\mathscr{M}$ and $\mathscr{N}$). 

If we consider the retraction map
$$
r^{\mathscr{M}}_s:X \rightarrow M_s 
$$
of Section \ref{sec:dendrites}, we can define the point $a_s:=r^{\mathscr{M}}_s(N_s)$. The equivariance of the families $\mathscr{M}$ and $\mathscr{N}$ and the uniqueness of the rectraction point imply that 
$$
a_{\gamma.s}=\sigma(\gamma,s)a_s \ ,
$$
for every $\gamma \in \Gamma$ and almost every $s \in \Omega$. In this way we obtained an equivariant measurable map $\alpha:\Omega \rightarrow X, \ \alpha(s):=a_s$, but this is a contradiction to the non-elementarity assumption. 
\end{proof}

Now we need to recall the notion of $\Gamma$-boundary for a countable group $\Gamma$. We are going to follow Bader and Furman \cite[Definition 2.1]{BF14}. 

Recall that, given a locally compact second countable group $\Gamma$, a \emph{Lebesgue} $\Gamma$-space is a standard Borel measure space where the $\Gamma$-action preserves only the measure class. Given an equivariant measurable map $p:U \rightarrow V$ between two Lebesgue $\Gamma$-spaces, a \emph{metric along $p$} is a Borel function $d: U \times_p U \rightarrow (0,\infty)$ on the fibered product whose restriction $d_v$ to the fiber $U_v:=p^{-1}(v)$ determines a separable metric space. The $\Gamma$-action is \emph{fiberwise isometric} if any $\gamma \in \Gamma$ acts isometrically on the fibers of $p$, that is $\gamma:U_v \rightarrow U_{\gamma.v}$ is an isometry, and 
$$
d_{\gamma.v}(\gamma.x,\gamma.y)=d_v(x,y) \ ,
$$
for every $\gamma \in \Gamma, v \in V, x,y \in U_v$. A measurable map $q:Y \rightarrow Z$ between Lebesgue $\Gamma$-spaces is \emph{relatively metrically ergodic} if for any fiberwise isometric $\Gamma$-action along a map $p:U \rightarrow V$ and any measurable $\Gamma$-equivariant maps $f:Y \rightarrow U$ and $g:Z \rightarrow V$, there exists a $\Gamma$-equivariant measurable map $\psi:Z \rightarrow U$ such that the following diagram commutes
\begin{equation}\label{eq:lift:diagram}
\xymatrix{
Y \ar[rr]^f \ar[d]^q && U \ar[d]^p \\
Z \ar[rr]^g \ar@{.>}[urr]^\psi && V \ . 
}
\end{equation}

\begin{deft}{\cite[Definition 2.1]{BF14}}\label{def:boundary}
Let $\Gamma$ be a locally compact second countable group. A $\Gamma$-\emph{boundary} is an amenable Lebesgue $\Gamma$-space $B$ such that the projection maps \mbox{$\pi_1:B \times B \rightarrow B$} and $\pi_2:B \times B \rightarrow B$ on the first and the second factor, respectively, are both relatively metrically ergodic. 
\end{deft}

Examples of $\Gamma$-boundaries are Furstenberg-Poisson boundaries for locally compact second countable groups \cite[Theorem 2.7]{BF14}. In particular, a $\Gamma$-boundary for a lattice $\Gamma$ in a semisimple Lie group $G$ can be identified with the quotient space $G/P$, where $P<G$ is a minimal parabolic subgroup. 

Recall that a $\Gamma$-boundary is always a \emph{strong boundary} \cite[Remarks 2.4.(1)]{BF14} in the sense of Burger-Monod \cite{burger2:articolo}.

We are now ready to introduce the notion of Furstenberg map.

\begin{deft}\label{def:furstenberg:map}
Let $\Gamma$ be a countable group and let $(\Omega,\mu)$ be a standard Borel probability $\Gamma$-space. Let $B$ be a $\Gamma$-boundary. Given a virtual dendro-morphism $\sigma:\Gamma \times \Omega \rightarrow \homeo$, we say that a Borel map 
$$
\phi:B \times \Omega \rightarrow \textup{CL}(X) 
$$
is $\sigma$-equivariant if it holds
\begin{equation}\label{eq:borel:equivariance}
\phi(\gamma.b,\gamma.s)=\sigma(\gamma,s)\phi(b,s) \ ,
\end{equation}
for every $\gamma \in \Gamma$ and almost every $b \in B, s \in \Omega$. A \emph{Furstenberg map} for $\sigma$ is a Borel equivariant map $\phi:B \times \Omega \rightarrow X$. 
\end{deft}

Given a $\Gamma$-boundary $B$ and an equivariant Borel map $\phi:B \times \Omega \rightarrow \textup{CL}(X)$, we can define the $s$-\emph{slice} of the map $\phi$ as
$$
\phi_s:B \rightarrow \textup{CL}(X) \ , \ \ \phi_s(b):=\phi(b,s) \ .
$$
By \cite[Chapter VII, Lemma 1.3]{margulis:libro} we have that $\phi_s$ is measurable and by Equation \eqref{eq:borel:equivariance} we obtain that
\begin{equation}\label{eq:equivariance:slice}
\phi_{\gamma.s}(b)=\sigma(\gamma,s)\phi(\gamma^{-1}b) \ ,
\end{equation}
for every $\gamma \in \Gamma$ and almost every $b \in B,s \in \Omega$. 

Before proving the existence of a Furstenberg map for a non-elementary virtual dendro-morphism, we need first to prove the following technical

\begin{lem}\label{lemma:technical}
Let $\Gamma$ be a countable group and let $\Omega$ be an ergodic standard Borel probability space. Let $B$ be a $\Gamma$-boundary. Given a measurable map 
$$
\phi:B \times \Omega \rightarrow \mathcal{P}_2(X) \ ,
$$
we have that
$$
[\phi_s(b)] \cap [\phi_s(b')]=\varnothing \ ,
$$
for almost every $s \in \Omega, b,b' \in B$. 
\end{lem}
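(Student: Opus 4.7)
The plan is to argue by contradiction, combining ergodicity on the triple product $B\times B\times\Omega$ with the Helly property for arcs in a dendrite. As stated, the lemma is missing a hypothesis: I would read it as implicitly assuming that $\phi$ is $\sigma$-equivariant for some non-elementary virtual dendro-morphism $\sigma:\Gamma\times\Omega\to\homeo$ (otherwise a constant $\phi$ trivially violates the conclusion), and it is in this setting that I will run the argument.

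First I introduce the set
$$
A=\{(b,b',s)\in B\times B\times\Omega : [\phi_s(b)]\cap[\phi_s(b')]\neq\varnothing\}.
$$
Homeomorphisms of $X$ permute arcs, and by the slice-equivariance encoded in Equation \eqref{eq:equivariance:slice} the set $A$ is invariant under the diagonal $\Gamma$-action on $B\times B\times\Omega$. Since $B$ is a $\Gamma$-boundary, the $\Gamma$-action on $B\times B$ is ergodic, and together with the ergodicity of $\Omega$ this yields ergodicity of $B\times B\times\Omega$. The standard ergodic dichotomy then applies: $A$ is either null (which is the desired conclusion) or conull, so the task reduces to ruling out the second alternative.

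Assume for contradiction that $A$ is conull. By Fubini there is a conull set of $s\in\Omega$ on which the slice $A_s\subseteq B\times B$ is conull, and from this I extract, measurably in $s$, a conull set $E_s\subseteq B$ such that the family $\{[\phi_s(b)]\}_{b\in E_s}$ is pairwise intersecting. I then invoke the Helly property for subdendrites: in the tree-like geometry of $X$, the intersection of two subdendrites is again a (possibly empty) subdendrite, so pairwise intersecting finite families of subdendrites admit a common point, and compactness of each arc upgrades this via the finite intersection property to the full family $\{[\phi_s(b)]\}_{b\in E_s}$. Consequently
$$
K_s:=\bigcap_{b\in E_s}[\phi_s(b)]
$$
is a non-empty subdendrite, in fact an arc (possibly degenerate) because it sits inside every $[\phi_s(b)]$, which is itself an arc.

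To close the loop, the assignment $s\mapsto K_s$ is Borel and the relation $K_{\gamma.s}=\sigma(\gamma,s)K_s$ follows from Equation \eqref{eq:equivariance:slice}. Taking the pair of endpoints of $K_s$ then produces a $\sigma$-equivariant Borel map $\Omega\to\mathcal{P}_{1,2}(X)$, contradicting the non-elementarity of $\sigma$ and forcing $A$ to be null, as claimed. The main subtlety I foresee is the genuinely canonical and Borel character of the choices of $E_s$ and $K_s$ with respect to the Vietoris topology on $\textup{CL}(X)$; the Helly property itself for arcs in a dendrite is classical, but weaving it into a measurable selection for the intersection $K_s$ is the step that demands the most care.
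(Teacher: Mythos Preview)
Your overall architecture matches the paper's: reduce by ergodicity on $B\times B\times\Omega$ to the conull case, then for almost every $s$ produce a non-empty common intersection of the arcs $[\phi_s(b)]$ over a conull set of $b$'s via the Helly property, and finish by exhibiting an equivariant family of points or arcs in $X$, contradicting non-elementarity. You are also right that the lemma tacitly assumes $\phi$ is $\sigma$-equivariant for a non-elementary $\sigma$.

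The genuine gap is the sentence ``from this I extract, measurably in $s$, a conull set $E_s\subseteq B$ such that the family $\{[\phi_s(b)]\}_{b\in E_s}$ is pairwise intersecting.'' Fubini only tells you that for almost every $s$ the slice $A_s\subseteq B\times B$ is conull, i.e.\ for almost every $b$ there is a conull set $B_{b,s}$ of $b'$ with $[\phi_s(b)]\cap[\phi_s(b')]\neq\varnothing$. Because the relation ``two arcs meet'' is not transitive, this does \emph{not} by itself yield a conull $E_s$ on which the arcs meet \emph{pairwise}: a conull symmetric reflexive relation on a probability space need not contain a conull clique. The paper supplies exactly the missing combinatorial step. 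If the full-measure set $B_s$ coming from Fubini does not already work, one picks witnesses $b,c\in B_s$ with $[\phi_s(b)]\cap[\phi_s(c)]=\varnothing$, sets $A_s:=B_{b,s}\cap B_{c,s}$, and applies the four-arc lemma \cite[Lemma~2.2]{DM16} to the quadruple $[\phi_s(b)],[\phi_s(b')],[\phi_s(c)],[\phi_s(c')]$ to force $[\phi_s(b')]\cap[\phi_s(c')]\neq\varnothing$ for all $b',c'\in A_s$. Only after this dendrite-specific argument does the Helly-type lemma \cite[Lemma~2.1]{DM16} apply to give the non-empty intersection. Your closing remark flags measurability of $E_s$ and $K_s$ as the main subtlety, but the real obstacle is one step earlier: the \emph{existence} of the pairwise-intersecting conull family, which requires the four-arc lemma and not just Fubini.
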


\begin{proof}
By the $\Gamma$-ergodicity on the space $B \times B \times \Omega$ \cite[Proposition 2.4]{MonShal0}, we have that either 
$
[\phi_s(b)] \cap [\phi_s(b')]
$
is empty for almost every $s \in \Omega, b,b' \in B$ or it is not trivial. Suppose by contradiction that 
$$
[\phi_s(b)] \cap [\phi_s(b')] \neq \varnothing \ ,
$$
for almost every $s \in \Omega, b,b' \in B$. 

We claim that, for almost every $s \in \Omega$, there exists a full measure subset $A_s \subset B$ such that 
$$
[\phi_s(b)] \cap [\phi_s(b')] \neq \varnothing \ ,
$$
for every $b,b' \in A_s$. By Fubini's Theorem, for almost every $s \in \Omega$, there exists a full measure subset $B_s \subset B$ and, given $b \in B_s$, there exists a full measure subset $B_{b,s} \subset B$ such that
$$
[\phi_s(b)] \cap [\phi_s(b')] \neq \varnothing \ ,
$$
for every $b' \in B_{b,s}$. Now we have two possibilities. If $B_s$ satisfies our claim we are done. If not, there must exist two points $b,c \in B_s$ such that
$$
[\phi_s(b)] \cap [\phi_s(c)] = \varnothing .
$$
We can define $A_s:=B_{b,s} \cap B_{c,s}$. For any $b',c' \in A_s$, by applying \cite[Lemma 2.2]{DM16} to the $4$-tuple of arcs $[\phi_s(b)],[\phi_s(b')],[\phi_s(c)],[\phi_s(c')]$, we get that 
$$
[\phi_s(b')] \cap [\phi_s(c')] \neq \varnothing \ ,
$$
and the claim is proved. 

Thanks to the claim, we can consider the largest set $A_s$ such that 
$$
[\phi_s(b)] \cap [\phi_s(c)] \neq \varnothing \ ,
$$
for every $b,c \in A_s$. Let $\mathscr{A}:=\{A_s\}_{s \in \Omega}$ be the collection of such sets. As a consequence of Equation \eqref{eq:equivariance:slice} and by the maximality assumption, we have that $\mathscr{A}$ is $\Gamma$-invariant, that is $A_{\gamma.s}=\gamma A_s$. For almost every $s \in \Omega$, the intersection 
$$
I_s:=\bigcap_{b' \in A_s} [\phi_s(b')] \neq \varnothing 
$$
is either an arc or a point. The fact that $I_s$ is not empty follows by \cite[Lemma 2.1]{DM16}. The ergodicity of $\Omega$ implies that $I_s$ is a point for almost every $s \in \Omega$ or is an arc for almost every $s \in \Omega$. Being the family $I_s$ equivariant (thanks to the invariance of $\mathscr{A}$ and the equivariance of $\phi$), in both cases we have that $\sigma$ is elementary and this is a contradiction. 
\end{proof}

We are finally ready to prove the existence of a Furstenberg map for a non-elementary cocycle.

\begin{teor}\label{teor:furstenberg:map}
Let $\Gamma$ be a countable group and let $(\Omega,\mu)$ be an ergodic standard Borel probability $\Gamma$-space. Consider a $\Gamma$-boundary $B$. Given a virtual dendro-morphism $\sigma:\Gamma \times \Omega \rightarrow \homeo$, if $\sigma$ is not elementary, then there exists a Furstenberg map
$$
\phi:B \times \Omega \rightarrow X \ .
$$
Additionally, if $\mathscr{M}=\{M_s\}_{s \in \Omega}$ is the equivariant minimal family of subdendrites associated to $\sigma$, then for almost every $s \in \Omega$ the slice $\phi_s$ takes values into the ends of $M_s$, that is 
$$
\phi_s:B \rightarrow \textup{Ends}(M_s) \ .
$$
\end{teor}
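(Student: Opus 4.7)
The construction follows the usual Furstenberg--Zimmer boundary map scheme adapted to the cocycle setting: produce a measure-valued equivariant map via amenability, reduce it to a point-valued map by exploiting the geometry of dendrites and the minimal family from Proposition \ref{prop:minimal:family}, and finally show that the resulting point lies in $\textup{Ends}(M_s)$.

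First, amenability of $B$ as a Lebesgue $\Gamma$-space, applied to the continuous affine $\Gamma$-action on the convex compact metrizable space $\upL^0(\Omega, \textup{Prob}(X))$ (convex and compact by \cite[Lemma 7.1.(6)]{BFS}), yields a measurable $\sigma$-equivariant map
$$
\mu:B\times\Omega\to\textup{Prob}(X),\qquad \mu(\gamma.b,\gamma.s)=\sigma(\gamma,s)_{*}\mu(b,s).
$$
Post-composing fiberwise with the retractions $r^{M_s}:X\to M_s$ associated with the minimal family $\mathscr{M}=\{M_s\}_{s\in\Omega}$ of Proposition \ref{prop:minimal:family}, and using the $\sigma$-equivariance of $\mathscr{M}$, we may assume that $\mu(b,s)$ is supported on $M_s$.

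Second, we extract a single point (or at worst an arc) from each measure $\mu(b,s)$ by means of a dendrite median. For $\nu\in\textup{Prob}(X)$ set
$$
\beta(\nu):=\bigcap\,\{\,D\subseteq X:D\text{ a subdendrite with }\nu(D)>1/2\,\}.
$$
Standard dendrite arguments show that $\beta(\nu)$ is non-empty and is either a point or an arc, that $\beta$ is Borel, and that $\beta$ is $\homeo$-equivariant. Applying $\beta$ fiberwise to $\mu$ produces a $\sigma$-equivariant Borel map $\psi:B\times\Omega\to\textup{CL}(X)$ whose values are arcs or points contained in $M_s$. If $\psi(b,s)$ were a non-degenerate arc on a positive-measure set, its two endpoints would define a $\sigma$-equivariant map $B\times\Omega\to\mathcal{P}_2(X)$; Lemma \ref{lemma:technical} combined with the non-elementarity of $\sigma$ would then yield the desired contradiction. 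Hence $\psi$ takes values in $X$, giving a Furstenberg map $\phi:B\times\Omega\to X$ in the sense of Definition \ref{def:furstenberg:map}.

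Third, for the ends statement observe that $C_s:=\overline{\{\phi_s(b):b\in B\}}\subseteq M_s$ is $\sigma$-equivariant by Equation \eqref{eq:equivariance:slice}, so its dendro-hull is a $\sigma$-equivariant subdendrite family contained in $\mathscr{M}$; by the uniqueness part of Proposition \ref{prop:minimal:family}, $[C_s]=M_s$ for almost every $s$. If $\phi_s(b)$ failed to lie in $\textup{Ends}(M_s)$ on a positive-measure set, the countable discrete fibers of the fundamental bundle $\textup{Bund}(M_s)$ would provide, via a measurable selection of a component of $M_s\setminus\{\phi_s(b)\}$ together with the equivariance of $\phi$, a $\sigma$-equivariant family of proper subdendrites strictly contained in $M_s$, contradicting the minimality of $\mathscr{M}$. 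I expect the principal obstacle to be the construction and verification of the dendrite median $\beta$ in the second step: one must check that the indicated intersection is always a single point or an arc and that the assignment is Borel and $\homeo$-equivariant, since it is this dichotomy that allows Lemma \ref{lemma:technical} and the double ergodicity of $B\times B\times\Omega$ to be leveraged in ruling out the arc case.
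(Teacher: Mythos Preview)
Your overall scheme---amenability gives a measure-valued map, a dendrite median reduces it to a point-or-arc, rule out the arc case, then show the point is an end of $M_s$---matches the paper's. The median step you flag as the principal obstacle is in fact not one: it is exactly \cite[Proposition~3.3]{DM16}, which the paper simply cites. The genuine gaps lie elsewhere.

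\textbf{Ruling out the $\mathcal{P}_2(X)$ case.} You write that Lemma~\ref{lemma:technical} ``combined with the non-elementarity of $\sigma$ would then yield the desired contradiction.'' But Lemma~\ref{lemma:technical} only establishes that the arcs $[\phi_s(b)]$ and $[\phi_s(b')]$ are disjoint for almost every $s,b,b'$; this is the \emph{starting point}, not the contradiction. The paper then encodes each pair $(b,b')$ into a point $(a_s(b),C_s(b,b'))$ of the fundamental bundle $\textup{Bund}(X)$ and applies the \emph{relative metric ergodicity} of the projection $B\times B\to B$ (Definition~\ref{def:boundary}) to force $C_s(b,b')$ to be independent of $b'$. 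Only after this does one obtain a closed connected set $\overline{C_s(b)}$ containing the essential image of $\phi_s$, hence containing $M_s$ by minimality, while meeting $\phi_s(b)$ in at most one point---the actual contradiction. Your proposal omits this entire mechanism; double ergodicity of $B\times B\times\Omega$ alone (which you mention at the end) is not enough, because the target has countable discrete fibres rather than being a point.

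\textbf{The Ends statement.} Your argument here is also incomplete. A ``measurable selection of a component of $M_s\setminus\{\phi_s(b)\}$'' produces something depending on $b\in B$, so it is a family over $B\times\Omega$, not over $\Omega$, and there is no reason it should be $\sigma$-equivariant in the sense needed to contradict minimality of $\mathscr{M}$. The paper again uses the auxiliary variable $b'$ to pick the component $C_s(b,b')$ containing $\phi_s(b')$, invokes relative metric ergodicity to make it independent of $b'$, and then argues that $M_s\subset\overline{C_s(b)}$ while $\phi_s(b)\in\textup{Ends}(\overline{C_s(b)})$. That is the missing idea in both steps: the relative metric ergodicity of $B\times B\to B$ applied to maps into $\textup{Bund}(X)$, not merely ordinary ergodicity.
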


\begin{proof}
Since $\sigma$ is not elementary, by Lemma \ref{prop:minimal:family} there exists a minimal family $\mathscr{M}=\{M_s\}_{s \in \Omega}$ for $\sigma$. For almost every $b \in B, s \in \Omega$, define $V_{b,s}:=\textup{Prob}(M_s)$ the space of probabilities on $M_s$. The collection $\mathscr{V}:=\{V_{b,s}\}_{b \in B, s \in \Omega}$ comes with a natural affine action determined by $\sigma$. Since $B$ is amenable, by \cite[Proposition 4.3.4]{zimmer:libro} $B \times \Omega$ is an amenable $\Gamma$-space. Thus there exists an equivariant section to the measurable field $\mathscr{V}$, that is an equivariant map 
$$
\widetilde{\phi}:B \times \Omega \rightarrow \textup{Prob}(X) 
$$
such that $\textup{EssIm}(\widetilde{\phi}_s) \subset \textup{Prob}(M_s)$ for almost every $s \in \Omega$. Here $\textup{EssIm}(\widetilde{\phi}_s)$ denotes the essential image of the slice $\widetilde{\phi_s}$. By composing $\widetilde{\phi}$ with the map given by \cite[Proposition 3.3]{DM16} we obtain a map 
$$
\phi:B \times \Omega \rightarrow \mathcal{P}_{1,2}(X) \ .
$$
By the ergodicity of $B \times \Omega$, we have either $\phi:B \times \Omega \rightarrow X$ or $\phi:B \times \Omega \rightarrow \mathcal{P}_2(X)$. We want to show that the latter case is impossible. 

Suppose by contradiction that we have a map $\phi:B \times \Omega \rightarrow \mathcal{P}_2(X)$. By Lemma \ref{lemma:technical} we know that 
$$
[\phi_s(b)] \cap [\phi_s(b')]=\varnothing \ ,
$$
for almost every $s \in \Omega$ and almost every $b,b' \in B$. We can define the map
$$
f:B \times B \times \Omega \rightarrow \textup{Bund}(X) \ , \ \ f(b,b',s)=(a_s(b),C_s(b,b')) \ ,
$$
where $a_s(b)$ and $C_s(b,b')$ are the unique point and connected component satisfying 
$$
a_s(b) \in [\phi_s(b)] \ , \ \phi_s(b') \subset C_s(b,b') \ , \ C_s(b,b') \cap \phi_s(b)=\varnothing \ .
$$
Notice that we can see $f$ as an element of $\textup{L}^0(\Omega,\textup{Bund}(X))$. Thus, we can apply the relative metric ergodicity of the first projection to get a lift in the following diagram
\begin{equation}\label{diag:lift}
\xymatrix{
B \times B \ar[rr] \ar[d] && \textup{L}^0(\Omega,\textup{Bund}(X))  \ar[d]^{p_\Omega}\\
B \ar[rr] \ar@{.>}[rru] && \textup{L}^0(\Omega,X) \ . 
}
\end{equation}
To apply correctly relative metric ergodicity, we metrize each fiber of the map $p_\Omega$ integrating along $\Omega$ the metrics given by the fiberwise metric along $p:\textup{Bund}(X) \rightarrow X$ (possibly renormalizing them to get bounded metrics, see \cite[Theorem 1]{SS21}). 

The existence of the lift in Diagram \eqref{diag:lift} implies that the component $C_s(b,b')$ does not actually depend on the point $b'$, for almost every $s \in \Omega$. By the way we defined the map $f$, we have that 
$$
\phi_s(b') \subset C_s(b) \ ,
$$
for almost every $b' \in B$. Thus the essential image $E_s$ of $\phi_s$ lies in the closure $\overline{C_s(b)}=C_s(b) \cup a_s(b)$, for almost every $b \in B$. Notice that $\overline{C_s(b)}$ can intersect $\phi_s(b)$ is at most one point. 

Exploiting the equivariance of the map $\phi$ and the fact that $\overline{C_s(b)}$ is closed and connected, the minimality of the family $\{ M_s \}_{s \in \Omega}$ implies that $M_s \subset \overline{C_s(b)}$. Denote by $B'$ the subset of $B$ where $M_s \subset \overline{C_s(b)}$. If we define 
$$
I_s:=\bigcap_{b \in B'} \overline{C_s(b)} \ ,
$$
we have that $M_s \subset I_s$. The latter inclusion leads to a contradiction: in fact $\phi_s(b) \subset M_s$ for almost every $b \in B$, whereas $I_s \cap \phi_s(b)$ is at most one point for almost every $b \in B$. Thus we get that $\phi:B \times \Omega \rightarrow X$ such that almost every $s$-slice takes values in $M_s$, being the latter a minimal family. 

Now we show that each $\phi_s$ has values into $\textup{Ends}(M_s)$. Again, by the ergodicity of the space $B \times B \times \Omega$ we must have that 
$$
\phi_s(b) \neq \phi_s(b') \ ,
$$
for almost every $s \in \Omega$ and almost every $b,b' \in B$ (otherwise $\sigma$ would be elementary). We define the map 
$$
f:B \times B \times \Omega \rightarrow \textup{Bund}(X) \ , \ \ f(b,b',s)=(\phi_s(b),C_s(b,b')) \ ,
$$
where $\phi_s(b') \in C_s(b,b')$. The same argument based on relative metric ergodicity shows that $C_s(b,b')$ does not depend on $b'$ for almost every $s \in \Omega$. Thus $\phi_s(b') \in C_s(b)$ for almost every $b' \in B$ and, by minimality, $M_s \subset \overline{C_s(b)}$. Since $\phi_s(b) \in \textup{Ends}(\overline{C_s(b)})$ and $\phi_s(b) \in M_s$, we must have $\phi_s(b) \in \textup{Ends}(M_s)$. 
\end{proof}

\begin{oss}\label{oss:unicita:essenziale}
Given a minimal family $\mathscr{M}=\{M_s\}_{s \in \Omega}$ for a non-elementary virtual dendro-morphism $\sigma$, by Theorem \ref{teor:furstenberg:map} we know that the $s$-slice of the Furstenberg map takes values in $M_s$. We claim that $\phi$ is essentially unique, that is, given another Furstenberg map $\psi$, we must have
$$
\phi(b,s)=\psi(b,s) \ ,
$$
for almost every $s \in \Omega$ and $b \in B$. Suppose not. By the ergodicity of $B \times \Omega$, we must have
$$
\phi(b,s) \neq \psi(b,s) \ ,
$$
for almost every $s \in \Omega, b \in B$. In this way we obtain an equivariant Borel map 
$$
B \times \Omega \rightarrow \mathcal{P}_2(X) \ , \ \ (b,s) \mapsto \{ \phi_s(b),\psi_s(b) \} \ , 
$$
which would contradict the proof of Theorem \ref{teor:furstenberg:map}. 
\end{oss}

\section{The canonical unitary representation of a virtual dendro-morphism}

In this section we are going to prove our main results. Given a virtual dendro-morphism $\sigma:\Gamma \times \Omega \rightarrow \homeo$, the main step of the proof will be the construction of a canonical unitary $\Gamma$-representation $V$ without invariant vectors such that $\upH^2_b(\Gamma;V)$ contains a non-trivial class. Before giving the details of such construction, we need to recall the cocycle introduced by Monod and Duchesne \cite[Section 9]{DM16}. 

Let $X$ be a dendrite. Given two points $p,q \in X$, we define a Borel function on $\textup{Bund}^2(X)$ as follows
\begin{equation}\label{eq:addition:term}
\alpha(p,q)(a,C,C'):=\begin{cases*}
&1 \text{if $p \in C,q \in C'$ and $C \neq C'$} \ ,\\
-&1 \text{if $p \in C', q \in C$ and $C \neq C'$} \ , \\
&0 \text{otherwise} \ .
\end{cases*}
\end{equation}

Recall that $C,C'$ are two connected components of $X \setminus \{a\}$. As noticed by Duchesne and Monod, the function $\alpha$ is alternating in both $(p,q)$ and $(C,C')$, it remains unchanged by the action of the group $\homeo$ and it is non-zero when the point $a$ lies in the open arc $(p,q)$. Starting from the function $\alpha$, we define an alternating $2$-cocycle taking values in the Borel function on $\textup{Bund}^2(X)$ by setting
$$
\omega_X(p,q,r):=\alpha(p,q)+\alpha(q,r)+\alpha(r,p) \ .
$$
Notice that $\omega_X(p,q,r)$ vanishes at a triple $(a,C,C')$ unless $a$ is the intersection point of the arcs $[p,q],[q,r],[p,r]$. 

By restricting $\omega_X$ to the subbundle $\Lambda(X) \subset \textup{Bund}^2(X)$ defined over the branch points of $X$, Duchesne and Monod \cite[Proposition 9.1]{DM16} proved that 
$$
\omega_X:X^3 \rightarrow \ell^p(\Lambda(X)) 
$$
is a $\homeo$-equivariant alternanting bounded Borel cocycle, where $1 \leq p < \infty$. The space $\ell^p(\Lambda(X))$ is the space of $p$-summable functions on $\Lambda(X)$, which is a separable isometric dual Banach $\homeo$-module. In particular, by \cite[Lemma 3.3.3]{monod:libro} the Borel structures induced by the norm topology, the weak topology and the weak-$^\ast$ topology all coincide. 

Given $1 \leq q < \infty$, we recall the definition of the \emph{Bochner space}
$$
\upL^q(\Omega,\ell^p(\Lambda(X))):=\{ u:\Omega \rightarrow \ell^p(\Lambda(X)) \ | \ \int_\Omega \lVert u(s) \rVert_{\ell^p}^q d\mu(s) < \infty \} . 
$$
In an analogous way, for $q=\infty$ we can set
$$
\upL^\infty(\Omega,\ell^p(\Lambda(X))):=\{ u:\Omega \rightarrow \ell^p(\Lambda(X)) \ | \ \textup{ess sup}_{\Omega} \lVert u(s) \rVert_{\ell^p} < \infty \} \ .
$$ 
For every $1 \leq p < \infty, 1 \leq q \leq \infty$, the Bochner space defined above is a Banach space. Additionally, we have a natural $\Gamma$-action defined by
\begin{equation}\label{eq:action:lp}
((\gamma.u)(s))(a):=u(\gamma^{-1}.s)(\sigma(\gamma^{-1},s)^{-1}(a)) \ ,
\end{equation}
for every $\gamma \in \Gamma, a \in \Lambda(X)$ and almost every $s \in \Omega$. In this way we get a coefficient $\Gamma$-module.

\begin{teor}\label{teor:construction:unitary}
Let $\Gamma$ be a countable group and let $(\Omega,\mu)$ be an ergodic standard Borel probability $\Gamma$-space. Let $\sigma:\Gamma \times \Omega \rightarrow X$ be a non-elementary virtual dendro-morphism. Denote by $V=\upL^\infty(\Omega,\ell^p(\Lambda(X)))$. Then $\textup{H}^2_b(\Gamma;V)$ contains a canonical non-trival element for $1 \leq p  < \infty$. 
\end{teor}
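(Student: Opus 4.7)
The plan is to pull back the Duchesne--Monod cocycle $\omega_X$ along the Furstenberg map produced by Theorem \ref{teor:furstenberg:map}, view the result as a bounded $2$-cocycle on a strong boundary with values in $V$, and then deduce non-vanishing from the minimality of the subdendrite family furnished by Proposition \ref{prop:minimal:family}.

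First, I would fix a $\Gamma$-boundary $B$ and let $\phi:B\times\Omega\to X$ be the Furstenberg map of Theorem \ref{teor:furstenberg:map}. Define
$$
c:B^3\to V,\qquad c(b_1,b_2,b_3)(s):=\omega_X\bigl(\phi_s(b_1),\phi_s(b_2),\phi_s(b_3)\bigr).
$$
The uniform $\ell^p$-bound on $\omega_X$ from \cite[Proposition 9.1]{DM16} ensures that $c$ is essentially bounded with values in $V$, while alternation and the cocycle identity are inherited directly from $\omega_X$. The $\Gamma$-equivariance of $c$ comes from combining the slice equivariance \eqref{eq:equivariance:slice}, the $\homeo$-equivariance of $\omega_X$, and the definition of the $\Gamma$-action on $V$ given by \eqref{eq:action:lp}. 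Since $B$ is a strong boundary by \cite[Remarks 2.4.(1)]{BF14} and $V$ is a separable isometric dual coefficient module, the Burger--Monod machinery \cite{burger2:articolo,monod:libro} identifies $\upH^\bullet_b(\Gamma;V)$ with the cohomology of $\Gamma$-equivariant essentially bounded alternating cochains on $B^{\bullet+1}$ with values in $V$; hence $c$ defines a canonical class $[c]\in\upH^2_b(\Gamma;V)$.

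The main obstacle will be showing $[c]\neq 0$. Suppose for contradiction that there exists a $\Gamma$-equivariant $\beta:B^2\to V$ with $d\beta=c$. The key geometric input is that, for almost every $s$ and for three distinct ends $\phi_s(b_1),\phi_s(b_2),\phi_s(b_3)\in\textup{Ends}(M_s)$ (distinctness being guaranteed by Theorem \ref{teor:furstenberg:map} together with double ergodicity of $B^2\times\Omega$), the element $\omega_X(\phi_s(b_1),\phi_s(b_2),\phi_s(b_3))$ is concentrated on the fiber of $\textup{Bund}^2(X)$ over the unique median branch point $m_s(b_1,b_2,b_3)\in M_s$. My strategy is to apply the relative metric ergodicity argument used in the proof of Theorem \ref{teor:furstenberg:map} to the putative $\beta$, forcing it to descend to a function of essentially one boundary variable; the coboundary identity $d\beta=c$ then collapses the three-point median assignment into a $\sigma$-equivariant Borel map $\Omega\to X$, contradicting non-elementarity via Proposition \ref{prop:minimal:family} (or Definition \ref{def:elementary:cocycle} directly). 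The delicate point will be carrying out this descent from a $1$-cochain on $B^2$ to a $0$-cochain on $\Omega$ while preserving $\Gamma$-equivariance and essential boundedness of all intermediate cochains, which is the technical heart of the argument and the reason one requires the Bader--Furman metric ergodicity framework rather than plain measurable ergodicity.
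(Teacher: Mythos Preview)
Your construction of the cocycle $c$ is correct and coincides with the paper's. The divergence is entirely in the argument for $[c]\neq 0$, and here you are making your life harder than necessary.

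The paper does not attempt to analyse a hypothetical primitive $\beta$ at all. Instead it uses the standard consequence of double ergodicity in the Burger--Monod resolution: a $\Gamma$-equivariant \emph{alternating} element of $\upL^\infty_{w^*}(B^2,V)$, read via the exponential law as a map $B^2\times\Omega\to\ell^p(\Lambda(X))$, is forced by the ergodicity of $B\times B\times\Omega$ (see \cite[Proposition~2.4]{MonShal0} and \cite{burger2:articolo}) to be essentially independent of the $B^2$-variable, and alternation then kills it. Hence there are no non-zero $1$-cochains, so $[c]=0$ if and only if $c$ vanishes identically. The remaining task is just to show $c\not\equiv 0$, which the paper does with a short Fubini argument: if $\omega_X(\phi_s(b_0),\phi_s(b_1),\phi_s(b_2))=0$ almost everywhere, fix a generic pair $(b_0,b_1)$ and deduce that the essential image of $\phi_s$ has at most two points, yielding a $\sigma$-equivariant map $\Omega\to\mathcal P_{1,2}(X)$ and contradicting non-elementarity.

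Your proposed route---supposing $c=d\beta$ and then trying to force $\beta$ to descend to a single boundary variable via relative metric ergodicity---is both unnecessary and, as written, not clearly workable. You have not specified the fiberwise isometric bundle along which the descent is to take place (what plays the role of $p:\textup{Bund}(X)\to X$ here?), nor explained how the identity $d\beta=c$ would convert the median data into a $\sigma$-equivariant map $\Omega\to X$. The Bader--Furman relative metric ergodicity was needed in Theorem~\ref{teor:furstenberg:map} precisely because the target there involves the genuinely non-linear bundle $\textup{Bund}(X)$; once $\phi$ exists, the non-triviality of the pulled-back class lives entirely in the linear world of coefficient modules, where plain double ergodicity (with coefficients) of the strong boundary $B$ suffices.
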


\begin{proof}
Let $B$ a $\Gamma$-boundary. By Theorem \ref{teor:furstenberg:map} there exists a Furstenberg map 
$$
\phi:B \times \Omega \rightarrow X \ , 
$$ 
and by Remark \ref{oss:unicita:essenziale} this map can be chosen canonically. We can now compose $\phi$ with the cocycle $\omega_X$ to get a map
$$
\phi^\ast \omega_X: B^3 \times \Omega \rightarrow \ell^p(\Lambda(X)) \ , 
$$
$$
\phi^\ast \omega_X(b_1,b_2,b_3,s):=\omega_X(\phi_s(b_0),\phi_s(b_1),\phi_s(b_2)) \ ,
$$
that can be viewed as an alternating bounded measurable $\Gamma$-invariant cocycle 
$$
\phi^\ast \omega_X:B^3 \rightarrow \upL^\infty(\Omega,\ell^p(\Lambda(X)))  \ , 
$$
in virtue of the exponential law \cite[Corollary 2.3.3]{monod:libro}. 

By \cite[Theorem 2]{burger2:articolo}, the fact that $B$ is an amenable $\Gamma$-space implies that $\phi^\ast \omega_X$ represents canonically a bounded cohomology class $[\phi^\ast \omega_X]$ in degree two, that is in $\upH^2_b(\Gamma;\upL^\infty(\Omega,\ell^p(\Lambda(X))))$. The alternating property of $\omega_X$ and the ergodicity of $B \times B \times \Omega$ imply that the non-vanishing of $\phi^\ast \omega_X$ guarantees that the class $[\phi^\ast \omega_X]$ is not trivial \cite{burger2:articolo}.

Thus, we are left to show that $\phi^\ast \omega_X$ does not vanish identically. By contradiction, suppose that it is actually trivial. This means that 
$$
\omega_X(\phi_s(b_0),\phi_s(b_1),\phi_s(b_2))=0 \ ,
$$
for almost every $s \in \Omega$ and almost every $b_0,b_1,b_2 \in B$. By Fubini's Theorem we can fix two points $b_0,b_1$ such that 
$$
\omega_X(\phi_s(b_0),\phi_s(b_1),\phi_s(b))=0 \ ,
$$
for almost every $b \in B$. This implies for almost every $s \in \Omega$, the essential image $E_s$ contains at most two points. By the ergodicity of $\Omega$, the cardinality of $E_s$ is essentially constant, so either we have a map $\Omega \rightarrow X$ or a map $\Omega \rightarrow \mathcal{P}_2(X)$. In both cases, the map is $\Gamma$-equivariant by the equivariance of the Furstenberg map. This would imply that $\sigma$ is elementary contradicting the assumptions. 
\end{proof}

We are finally ready to prove 

\begin{repteor}{teor:unitary:rep}
Let $\Gamma$ be a countable group and let $(\Omega,\mu)$ be an ergodic standard Borel probability $\Gamma$-space. Given a non-elementary virtual dendro-morphism $\sigma:\Gamma \times \Omega \rightarrow \homeo$, there exists a canonical unitary $\Gamma$-representation $V$ without invariant vectors and such that $\textup{H}^2_b(\Gamma;V)$ has a non-zero class. 
\end{repteor}

\begin{proof}
Consider the Bochner space $V=\upL^2(\Omega,\ell^2(\Lambda(X)))$. The latter is a Hilbert space with the scalar product defined by
$$
\langle u,v \rangle_V:=\int_\Omega \langle u(s),v(s) \rangle_{\ell^2(\Lambda(X))} d\mu(s) \ .
$$

Since $\Omega$ is a standard Borel space and $\ell^p(\Lambda(X))$ is separable, $V$ is separable as well. The inclusion 
$$
\upL^\infty(\Omega,\ell^2(\Lambda(X))) \rightarrow V \ ,
$$
is injective and adjoint, thus \cite[Corollary 9]{burger2:articolo} implies that the map
$$
\upH^2_b(\Gamma;\upL^\infty(\Omega,\ell^2(\Lambda(X)))) \rightarrow \upH^2_b(\Gamma;V) 
$$ 
is injective. As a consequence the class $[\phi^\ast \omega_X]$ constructed in theorem \ref{teor:construction:unitary} is not trivial when viewed as a class with coefficients in $V$. 

We are left to show that $V$ does not contain any invariant vector. Suppose by contradiction that $u \in V$ is an invariant vector. This means that 
$\gamma.u=u$ for every $\gamma \in \Gamma$. Using Equation \eqref{eq:action:lp} we have that
$$
u(s)(a)=u(\gamma^{-1}.s)(\sigma(\gamma^{-1},s)^{-1}(a)) \ ,
$$
for every $\gamma \in \Gamma, a \in \Lambda(X)$ and almost every $s \in \Omega$. As a consequence we have that the function
$$
s \mapsto \lVert u(s) \rVert_{\ell^\infty} 
$$
is essentially constant, by the ergodicity of $\Omega$. Let $\lambda$ be the essential image. We define
$$
\Lambda_s:=\{ a \in \Lambda(X) \ | \ |(u(s)(a))| =\lambda \} .
$$
Since the function $u(s) \in \ell^2(\Lambda(X))$, the set $\Lambda_s$ is not empty and finite for almost every $s \in \Omega$. 

Let $\pi:\Lambda(X) \rightarrow \textup{Br}(X)$ the bundle projection over the branch points of $X$. We define
$$
L_s:=\pi(\Lambda_s) \ . 
$$
Since the level set $\Lambda_s$ is finite, the same holds for $L_s$. Being finite, we have that $T_s:=[L_s]$ is a tree. Additionally, the $\Gamma$-invariance of $u$ and the $\homeo$-equivariance of the projection $\pi$ imply that 
$$
L_{\gamma.s}=\sigma(\gamma,s)L_s \ ,
$$
and hence 
$$
T_{\gamma.s}=\sigma(\gamma,s)T_s \ .
$$

Since the tree $T_s$ is finite, we can apply Jordan's algorithm \cite{Jor69} to get a unique point or egde: in each step we eliminate the leaves of $T_s$ having a free vertex until we obtain either a unique edge (the center) or a set of egdes joined by a unique point (the center). Thus, applying to each $T_s$ the Jordan's center construction, we get either an equivariant map $\varphi:\Omega \rightarrow X$ or an equivariant map $\varphi:\Omega \rightarrow \mathcal{P}_2(X)$. In this way we get a contradiction to the non-elementarity of $\sigma$. 
\end{proof}

Using the previous theorem we can finally prove our main result. 

\begin{repteor}{teor:elementary}
Let $\mathbf{G}$ be a connected almost $k$-simple algebraic group over a field $\kappa$ with $\textup{rank}_{\kappa}(\mathbf{G}) \geq 2$. Let $\Gamma$ be a lattice the $k$-points of $\mathbf{G}$. Any virtual dendro-morphism of $\Gamma$ over an ergodic standard Borel probability $\Gamma$-space is elementary. 
\end{repteor}

\begin{proof}
Let $\sigma:\Gamma \times \Omega \rightarrow \homeo$ be a virtual dendro-morphism. Suppose by contradiction that $\sigma$ is not elementary. Take the unitary representation $V$ of Theorem \ref{teor:unitary:rep}. Since $V$ has no invariant vectors, by \cite[Theorem 1.4]{MonShal0} we have that
$$
\dim \upH^2_b(\Gamma;V)=0 \ .
$$
The latter is a contradiction to the fact stated in Theorem \ref{teor:unitary:rep} which guarantees the existence of a non-trivial class in degree $2$. This proves the statement and concludes the proof. 
\end{proof}

\bibliographystyle{amsalpha}

\bibliography{biblionote}

\end{document}